\newtheorem{thm}{Theorem}
\newtheorem{lem}[thm]{Lemma}
\newtheorem{prop}[thm]{Proposition}
\newtheorem{cor}[thm]{Corollary}
\newtheorem{pb}[thm]{Problem}
\newtheorem{conj}[thm]{Conjecture}
\theoremstyle{definition}
\newtheorem{rem}[thm]{Remark}
\newcommand{\IR}{\mathbb{R}}
\newcommand{\IC}{\mathbb{C}}
\newcommand{\cS}{\mathcal{S}}
\renewcommand{\L}{\mathrm{L}}
\renewcommand{\H}{\mathrm{H}}
\newcommand{\BMO}{\mathrm{BMO}}
\newcommand{\E}{\mathsf{E}}
\newcommand{\e}{\mathrm{e}}
\renewcommand{\d}{\; \mathrm{d}}
\renewcommand{\i}{\mathrm{i}}
\newcommand{\eps}{\varepsilon}
\renewcommand\Re{\operatorname{Re}}
\renewcommand\Im{\operatorname{Im}}
\newcommand{\Lop}{\mathcal{L}}
\newcommand{\cl}[1]{\overline{#1}}
\renewcommand\div{\operatorname{div}}
\newcommand{\dhalf}{D_t^{1/2}} 
\newcommand{\HT}{H_t}
\newcommand{\sgn}{\operatorname{sgn}}
\newcommand{\essup}{\operatorname{essup}}
\renewcommand{\a}{\mathfrak{a}}
\newcommand{\A}{\mathfrak{A}}
\newcommand{\fe}{\mathfrak{e}}
\newcommand{\fL}{\mathfrak{L}}
\def\Xint#1{\mathchoice
{\XXint\displaystyle\textstyle{#1}}%
{\XXint\textstyle\scriptstyle{#1}}%
{\XXint\scriptstyle\scriptscriptstyle{#1}}%
{\XXint\scriptscriptstyle%
\scriptscriptstyle{#1}}%
\!\int}
\def\XXint#1#2#3{{\setbox0=\hbox{$#1{#2#3}{%
\int}$ }
\vcenter{\hbox{$#2#3$ }}\kern-.6\wd0}}
\def\barint{\,\Xint-} 
\title[Non-autonomous maximal regularity for elliptic operators]{On non-autonomous maximal regularity for elliptic operators in divergence form}
\author{Pascal Auscher}
\author{Moritz Egert}
\address{Laboratoire de Math\'{e}matiques d'Orsay, Univ. Paris-Sud, CNRS, Universit\'{e} Paris-Saclay, 91405 Orsay, France}
\email{pascal.auscher@math.u-psud.fr, moritz.egert@math.u-psud.fr}
\thanks{The authors were partially supported by the ANR project ``Harmonic Analysis at its Boundaries'', ANR-12-BS01-0013. M.E.\ was supported by a public grant as part of the FMJH}
\subjclass[2010]{Primary: 35K15. Secondary: 47A07, 26A33.} 
\date{\today}
\dedicatory{}
\keywords{non-autonomous forms, maximal regularity, elliptic differential operators, fractional derivatives, commutator estimates}
\begin{document}
\begin{abstract}
We consider the Cauchy problem for non-autonomous forms inducing elliptic operators in divergence form with Dirichlet, Neumann, or mixed boundary conditions on an open subset $\Omega \subseteq \IR^n$. We obtain maximal regularity in $\L^2(\Omega)$ if the coefficients are bounded, uniformly elliptic, and satisfy a scale invariant bound on their fractional time-derivative of order one-half. Previous results even for such forms required control on a time-derivative of order larger than one-half.
\end{abstract}
\maketitle
\section{Introduction}
\label{Sec: Introduction}

\noindent Let $V$ be a complex Hilbert space, $V^*$ be the anti-dual space of conjugate-linear functionals on $V$, and $H$ be a second complex Hilbert space in which $V$ densely embeds. Suppose that $\a: [0,T] \times V \times V \to \IC$ is a strongly measurable, bounded, quasi-coercive, non-autonomous form: Each $\a(t, \cdot, \cdot)$ is a sesquilinear form on $V$ and there exist constants $\Lambda, \lambda, \eta > 0$ such that 
\begin{align}
\label{Eq: Quasi coercivity}
 |\a(t,v,w)| \leq \Lambda \|v\|_V \|w\|_V \qquad \text{and} \qquad  \Re \a(t,v,v) \geq \lambda \|v\|_V^2 - \eta \|v\|_H^2
\end{align}
hold for all $v,w \in V$ and $t \in [0,T]$. Each form $\a(t,\cdot, \cdot)$ induces a bounded operator $\A(t) \in \Lop(V, V^*)$ via $\langle \A(t)v , w \rangle = \a(t, v, w)$. A classical result due to J.L.~Lions states that for each $f \in \L^2(0,T; V^*)$ the non-autonomous Cauchy problem
\begin{align}
\label{Eq: Cauchy}
 u'(t) + \A(t)u(t) = f(t), \qquad u(0)=0
\end{align}
has a unique solution $u \in \H^1(0,T; V^*) \cap \L^2(0,T; V)$, see \cite[p.~513]{Dautray-Lions}. This is usually rephrased as saying that $\a$ admits maximal regularity in $V^*$. We remark that the original argument needs that $H$ is separable but this assumption is not necessary due to a new proof of Dier and Zacher \cite[Thm.~6.1]{Dier-Zacher}. A famous problem, first posed explicitly by J.L.~Lions in 1961 (see \cite[p.~68]{Lions-Problem}), concerns maximal regularity in the smaller space $H$:

\begin{pb}
\label{Pb: Lions problem}
Is it true that for every $f \in \L^2(0,T;H)$ the unique solution $u$ of \eqref{Eq: Cauchy} belongs to the space $\H^1(0,T; H)$?
\end{pb}

In the autonomous case $\A(t) = \A(0)$, de Simon proved in 1964 that maximal regularity in $H$ holds true if and only if the part of $-\A(0)$ in $H$ generates a holomorphic $C_0$-semigroup~\cite{DeSimon}. Recent progress in the non-autonomous case has thrust the $\alpha$-H\"older continuity
\begin{align}
\label{Eq: Holder condition}
 \|\A(t)-\A(s)\|_{V \to V^*} \leq C|t-s|^\alpha \qquad (t,s \in [0,T])
\end{align}
of $\A$ into the spotlight: On the one hand Ouhabaz and Spina answered Problem~\ref{Pb: Lions problem} in the affirmative if $\A$ is H\"older continuous of exponent $\alpha > \frac{1}{2}$, see \cite[Thm.~3.3]{Ouhabaz-Spina}. Astonishingly, Fackler on the other hand was able to construct a symmetric non-autonomous form that is $\alpha$-H\"older continuous for every $\alpha < \frac{1}{2}$ but still fails maximal regularity in $H$, see \cite[Thm.~5.1]{Fackler-QE}. Dier and Zacher \cite{Dier-Zacher} replaced the classical H\"older assumption by its square-integrated version
\begin{align}
\label{Eq: Fractional regularity}
\int_0^T \int_0^T \frac{\|\A(t)-\A(s)\|_{V \to V^*}^2}{|t-s|^{2\alpha}} \; \frac{\mathrm{d} s \; \mathrm{d} t}{|t-s|} < \infty,
\end{align}
usually referred to as fractional $\L^2$-Sobolev regularity of order $\alpha$. However, in order to prove maximal regularity in $H$ they had again to assume $\alpha > \frac{1}{2}$. 

All these results left open the borderline case of $\frac{1}{2}$-regularity. In connection with his counterexample, Fackler \cite{Fackler-QE} raised the question whether maximal regularity in the case $\alpha = \frac{1}{2}$ holds if $\a$ induces elliptic operators in divergence form. In this note we shall provide a first positive answer to this question: If $\a$ induces elliptic operators in divergence form with real symmetric coefficients, then a sufficient condition for maximal regularity in $H$ is
\begin{align}
\label{Eq: First BMO condition}
\sup_{I \subseteq [0,T]} \frac{1}{\ell(I)} \int_I \int_I \frac{\|\A(t)-\A(s)\|_{V \to V^*}^2}{|t-s|} \; \frac{\mathrm{d} s \; \mathrm{d} t}{|t-s|} < \infty,
\end{align}
where $I$ is an interval and $\ell(I)$ its length. This is the scale invariant version of Dier and Zacher's condition \eqref{Eq: Fractional regularity} in the borderline case $\alpha = \frac{1}{2}$, but \eqref{Eq: Fractional regularity} and \eqref{Eq: First BMO condition} do not compare. Moreover, we obtain maximal regularity in $H$ for complex and possibly non-symmetric coefficients if we impose a condition akin to \eqref{Eq: First BMO condition} directly on the coefficients. In order to state our result more precisely, we need to recall some standard notation.

\subsection{Notation and precise statement of the main result}
\label{Subsec: Notation}

The John-Nirenberg space $\BMO(\IR)$ is defined as the space of locally integrable functions $f$ modulo constants that have bounded mean oscillation
\begin{align*}
 \|f\|_{\BMO(\IR)} := \sup_{I} \barint_I |f(t) - f_I| \d t < \infty,
\end{align*}
where the supremum runs over all bounded intervals $I \subseteq \IR$ and $f_I$ denotes the average on $I$. For $\alpha \in (0,1)$ the fractional $t$-derivative $D_t^\alpha$ is defined on the space $\cS'(\IR)/ \mathcal{P}$ of tempered distributions modulo polynomials by the Fourier symbol $|\tau|^\alpha$. 

By a non-autonomous form $\a$ inducing elliptic operators in divergence form with either Dirichlet, Neumann, or mixed boundary conditions, we always mean the following special setup: The Hilbert space $H = \L^2(\Omega)$, where $\Omega \subseteq \IR^n$ is a non-empty open set, the Hilbert space $V$ is a closed subspace of the first-order Sobolev space $\H^1(\Omega)$ that contains $\H_0^1(\Omega)$, the closure of the test functions in $\H^1(\Omega)$, and 
\begin{align}
\label{Eq: Differential form}
 \a(t,v,w) = \int_\Omega A(t,x) \nabla v(x) \cdot \cl{\nabla w(x)} \d x,
\end{align}
where $A: [0,T] \times \Omega \to \IC^{n \times n}$ is a bounded and measurable function for which there exists $\Lambda, \lambda > 0$ such that
\begin{align}
\label{Eq: ellipticity}
\lambda |\xi|^2 \leq \Re (A(t,x)\xi \cdot \cl{\xi}) \qquad \text{and} \qquad |A(t,x)\xi \cdot \zeta| \leq \Lambda |\xi| |\zeta| 
\end{align}
for all $t \in [0,T]$, a.e.\ $x \in \Omega$, and all $\xi, \zeta \in \IC^n$. Note that such $\a$ satisfies \eqref{Eq: Quasi coercivity} with $\eta = \lambda$. The non-autonomous form introduced above induces the divergence form operators $\A(t) = -\div A(t, \cdot) \nabla \in \Lop(V, V^*)$. The boundary conditions on $\partial \Omega$ are encoded in $V$ by a formal integration by parts only if one restricts to the part of $\A(t)$ in $H$. In fact, this is the reason why for forms inducing differential operators the notion of maximal regularity in $H$ is so much more preferable to that of maximal regularity in $V^*$. Finally, we note that for any bounded form as in \eqref{Eq: Differential form} (no need for coercivity) 
\begin{align*}
\|\A(t)\|_{V \to V^*} \lesssim \essup_{x \in \Omega} |A(t,x)|,
\end{align*}
where here and throughout $|A(t,x)|$ denotes the norm of $A(t,x)$ as an operator on the Euclidean space $\IC^n$, and that the converse estimate holds at least if $A(t,x)$ is real and symmetric for almost every $x$. The latter statement can be proved by mimicking the argument performed in \cite{Hendrik} for the lower bound of sesquilinear forms. In particular, when applied to the differences $\A(t) - \A(s)$, we see that for real symmetric coefficients the abstract H\"older condition \eqref{Eq: Holder condition} is equivalent to
\begin{align*}
 |A(t,x) - A(s,x)| \leq C |t-s|^\alpha \qquad (\text{a.e.\ $x \in \Omega$, $s, t \in [0,T]$}).
\end{align*}
Our main result can now be stated as follows.

\begin{thm}
\label{Thm: Main}
Consider a non-autonomous form $\a$ inducing elliptic divergence-form operators with either Dirichlet, Neumann, or mixed boundary conditions on an open set $\Omega \subseteq \IR^n$ as defined above. If there exists a finite $M\geq 0$ such that
\begin{align}
\label{Eq: Ass A}
\qquad \sup_{I \subseteq [0,T]} \frac{1}{\ell(I)} \int_I \int_I \frac{|A(t,x) - A(s,x)|^2}{|t-s|^{2}} \d s \d t \leq M \qquad (\text{a.e.\ $x \in \Omega$}),
\end{align}
then $\a$ has maximal regularity in $H = \L^2(\Omega)$. More precisely, given $f \in \L^2(0,T; H)$, the unique solution $u \in \H^1(0,T; V^*) \cap \L^2(0,T; V)$ of problem \eqref{Eq: Cauchy} satisfies
\begin{align*}
 \|u\|_{\H^1(0,T; H)} + \|u\|_{\H^{1/2}(0,T; V)} \leq C \|f\|_{\L^2(0,T; H)},
\end{align*}
where $C$ depends on $\lambda$, $\Lambda$, $M$, $T$, and $n$.
\end{thm}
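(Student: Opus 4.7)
The plan is to apply a half-order time derivative to the equation and reduce the upgrade from $V^*$-maximal regularity to $H$-maximal regularity to a sharp commutator estimate at the borderline regularity level. The classical Lions theory already provides $u\in\H^1(0,T;V^*)\cap\L^2(0,T;V)$ with $\|u\|_{\H^1(V^*)}+\|u\|_{\L^2(V)}\lesssim\|f\|_{\L^2(H)}$; the remaining task is therefore to show $u'\in \L^2(H)$. Since $u(0)=0$, I would extend $u$, $A$ and $f$ by zero to $\IR$ so that $D_t^{1/2}$ acts as a Fourier multiplier, and apply it to $u'+\A u=f$. Because $D_t^{1/2}$ commutes with the spatial operators $\nabla$ and $\div$, one obtains the modified Cauchy problem
\begin{align*}
 (D_t^{1/2}u)'+\A\,D_t^{1/2}u \;=\; D_t^{1/2}f + \div\bigl([D_t^{1/2},M_A]\nabla u\bigr),
\end{align*}
where $M_A$ denotes pointwise-in-$x$ multiplication in $t$ by the matrix $A(\cdot,x)$.

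Working on Galerkin approximations (so that everything is smooth in $t$) and pairing the equation for $u$ against $D_t^{1/2}u$, self-adjointness and causality of $D_t^{1/2}$ together with one integration by parts across the form $\a$ yield a quadratic energy inequality of the shape
\begin{align*}
 \lambda\,\|D_t^{1/2}\nabla u\|_{\L^2((0,T)\times\Omega)}^2 \;\lesssim\; \|f\|_{\L^2(H)}^2 + \bigl|\bigl\langle [D_t^{1/2},M_A]\nabla u,\,D_t^{1/2}\nabla u\bigr\rangle\bigr|,
\end{align*}
together with lower-order terms dominated by Lions' a priori bound. To close this, the commutator must be absorbed into the principal term on the left.

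The harmonic-analytic heart of the proof is the sharp one-dimensional commutator estimate
\begin{align*}
 \|[D_t^{1/2},M_b]\,g\|_{\L^2(\IR)} \;\leq\; C\,\|b\|_\sharp\,\|g\|_{\L^2(\IR)}, \qquad \|b\|_\sharp^2 := \sup_{I}\frac{1}{\ell(I)}\int_I\!\int_I\frac{|b(t)-b(s)|^2}{|t-s|^2}\,\d s \d t,
\end{align*}
a half-derivative analogue of the Coifman--Rochberg--Weiss theorem that identifies $\|b\|_\sharp$ with the $\BMO$-norm of $D_t^{1/2}b$ via Strichartz's characterisation. I would prove it by a Littlewood--Paley/paraproduct decomposition of the symbol difference $|\tau|^{1/2}-|\tau-\sigma|^{1/2}$ combined with Fefferman--Stein $\H^1$--$\BMO$ duality. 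Invoking this estimate pointwise in $x$ with $b=A(\cdot,x)$, squaring and integrating over $\Omega$, the hypothesis \eqref{Eq: Ass A} yields
\begin{align*}
 \|[D_t^{1/2},M_A]\nabla u\|_{\L^2((0,T)\times\Omega)}\;\lesssim\;\sqrt{M}\,\|\nabla u\|_{\L^2((0,T)\times\Omega)},
\end{align*}
which Cauchy--Schwarz puts in position to be absorbed, giving $\|D_t^{1/2}u\|_{\L^2(V)}\lesssim\|f\|_{\L^2(H)}$. Feeding this back into the modified Cauchy problem displayed above and applying Lions' maximal regularity in $V^*$ produces in addition $D_t^{3/2}u\in \L^2(V^*)$; interpolating the bounds on $D_t^{1/2}u\in\L^2(V)$ and $D_t^{3/2}u\in\L^2(V^*)$ at the half-way point $[V,V^*]_{1/2}=H$ then recovers $u'\in\L^2(H)$ with the quantitative dependence asserted.

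The principal obstacle I expect is the sharp commutator estimate at this borderline: any logarithmic loss would block absorption and preclude the scale-invariant hypothesis \eqref{Eq: Ass A}. Secondary technical issues — rigorously justifying the extension outside $[0,T]$, controlling initial and terminal contributions for the anti-symmetric part of $\a$, and verifying the interpolation step — are substantial but more routine.
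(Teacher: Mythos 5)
Your central idea --- commuting $\dhalf$ through the equation and controlling $[\dhalf, A(\cdot,x)]$ by the sharp Murray/$T(1)$ commutator bound, whose hypothesis is exactly the Strichartz characterization of \eqref{Eq: Ass A} --- is precisely the engine of the paper's proof, and your identification of $\|b\|_\sharp$ with $\|\dhalf b\|_{\BMO(\IR)}$ is correct. However, two steps as you describe them would fail. First, the extension to the real line: extending $A$ by zero outside $[0,T]$ destroys ellipticity and, more fatally, the regularity hypothesis itself --- a jump discontinuity in $t$ has $\int_I\int_I |b(t)-b(s)|^2|t-s|^{-2}\d s\d t=\infty$ for any interval $I$ straddling the jump, so the extended coefficients no longer satisfy the commutator estimate. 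Likewise the zero-extension of $u$ past $t=T$ jumps at $T$ (where $u(T)\neq 0$ in general), so $\dhalf$ of that extension is not in $\L^2$. The paper instead extends $A$ by even reflection followed by a Lipschitz cutoff onto the constant matrix $\lambda I$ (Lemma~\ref{Lem: extension}), and constructs a global-in-time solution of a damped equation $u'+u+\A u=\e^t E_0f$ on $\IR$ directly, rather than extending the Lions solution. Also note that $\dhalf$, with symbol $|\tau|^{1/2}$, is not causal, so the "causality" you invoke in the energy pairing is not available.

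Second, the concluding step does not yield $u'\in\L^2(\IR;H)$. Testing the commuted equation against $\dhalf u$ controls only $\|\dhalf\nabla u\|_{\L^2}$ (the term $\Re\int\langle(\dhalf u)',\dhalf u\rangle\d t$ vanishes on the line and gives nothing), and your proposed repair --- feeding back into $V^*$-maximal regularity to get $D_t^{3/2}u\in\L^2(\IR;V^*)$ and interpolating --- breaks down because the source term $\dhalf f$ lies only in $\H^{-1/2}(\IR;H)$, not in $\L^2(\IR;V^*)$. What is needed is solvability of the commuted equation in the dual of the energy space $\E=\H^{1/2}(\IR;H)\cap\L^2(\IR;V)$, obtained from the hidden coercivity revealed by testing against $(1+\delta\HT)\dhalf u$ with $\HT$ the Hilbert transform (Lemma~\ref{Lem: WP on R}); this controls $\|\dhalf\dhalf u\|_{\L^2(\IR;H)}$ and $\|\nabla\dhalf u\|_{\L^2}$ simultaneously and gives both asserted norms. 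Finally, your Galerkin regularization smooths in $x$, not in $t$, so it does not justify applying $\dhalf$ twice to $u$; the paper instead mollifies $A$ and $f$ in time and uses a difference-quotient lemma for Lipschitz coefficients before passing to the limit.
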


\begin{rem}
\label{Rem: Main}
From the discussion above, we conclude that for real symmetric $A$ the condition \eqref{Eq: Ass A} is even weaker than \eqref{Eq: First BMO condition} as we have interchanged the essential supremum in $x$ with the integral sign. The additional regularity $u \in \H^{1/2}(0,T; V)$ is not expected \emph{a priori}, given the notion of maximal regularity in $H$. This seems to be a somewhat new phenomenon that first appeared in \cite{Dier-Zacher}. For background information on the vector-valued fractional Sobolev spaces the reader can refer to the appendix of \cite{Dier-Zacher}.
\end{rem}

We shall give the proof of Theorem~\ref{Thm: Main} in Section~\ref{Sec: proof} below. It relies on a reduction to the non-autonomous problem on the real line, which we shall investigate in Section~\ref{Sec: real line}. Therein, the $\L^2$-boundedness of commutators $[A(\cdot,x), \dhalf]$ for $x \in \Omega$ under our assumption on $A$ will be the crucial ingredient. Let us remark that commutator estimates have also been a central theme in Dier and Zacher's new approach to maximal regularity \cite{Dier-Zacher}. The difference is that in our special setup $A(\cdot,x)$ is valued in the finite dimensional space $\IC^{n \times n}$. Thus, we can rely on the optimal commutator bound and do not have to waste an `$\eps$ of a derivative' as is traditional in some vector-valued extensions.

\subsection{Comparison to earlier results}
\label{Subsec: Comparison}

Let us close the discussion by relating the regularity assumption in Theorem~\ref{Thm: Main} to previously introduced conditions (in the case of real symmetric coefficients). To do so rigorously, we anticipate an extension result from Lemma~\ref{Lem: extension} further below: On assuming \eqref{Eq: Ass A}, we can extend $A$ to a map $\IR \times \Omega \to \IC^{n \times n}$ in such a way that this estimate remains valid for every bounded interval $I \subseteq \IR$. Being real and symmetric is a property that is preserved under this extension. 

Taking this lemma for granted, the results of Strichartz \cite{Strichartz-BMOSobolev} on $\BMO$-Sobolev spaces yield that \eqref{Eq: Ass A} is strictly stronger than $\frac{1}{2}$-H\"older continuity of $\A$ and in fact equivalent to $A$ having an extension $A: \IR \times \Omega \to \IC^{n \times n}$ that satisfies
\begin{align*}
 \|\dhalf A(\cdot,x)\|_{\BMO(\IR)^{n \times n}} \leq C M \qquad (\text{a.e.\ $x \in \Omega$}).
\end{align*}
To see how the results in \cite{Strichartz-BMOSobolev} apply, the reader should recall that the essential supremum of $A(t)$ with respect to $x$ compares to the norm of $\A(t)$. Taking into additional account the classical embeddings of Besov spaces \cite[Sec.~V.5.2]{Stein}, we obtain that \eqref{Eq: Ass A} is strictly weaker than any of the Dini conditions
\begin{align*}
 \int_0^T \operatorname{essup}_{s \in \IR}\|\A(t+s) - \A(s)\|_{V \to V*}^q \; \frac{\mathrm{d} t}{t^{1+q/2}} < \infty,
\end{align*}
where $q \in [1,2]$. For $q=1$ this is the condition used by Ouhabaz and Haak~\cite{Haak-Ouhabaz}. We also see that any of the conditions above is implied by $\alpha$-H\"older continuity for an $\alpha > \frac{1}{2}$. Coming up with a particular example, an admissible function in product form $A(t,x) = 1+|t|^{1/2} A(x)$ will not satisfy any of the Dini conditions but it does satisfy~\eqref{Eq: Ass A} since $\dhalf |t|^{1/2} = \log |t|$ is a $\BMO$-function on the real line.
\section{The non-autonomous problem on the real line}
\label{Sec: real line}

\noindent We begin by investigating the non-autonomous problem on the real line. So, following our previously introduced notation on forms inducing elliptic operators in divergence form, we assume that $A: \IR \times \Omega \to \IC^{n \times n}$ is bounded, measurable, and coercive in the sense that
\begin{align*}
\lambda |\xi|^2 \leq \Re (A(t,x)\xi \cdot \cl{\xi}) \qquad \text{and} \qquad |A(t,x)\xi \cdot \zeta| \leq \Lambda |\xi| |\zeta| 
\end{align*}
hold for all $t \in \IR$, a.e.\ $x \in \Omega$, and all $\xi, \zeta \in \IC^n$. We let $\a: \IR \times V \times V \to \IC$ be the corresponding non-autonomous form defined as in \eqref{Eq: Differential form}. Next, $\nabla_V: V \to \L^2(\Omega)^n$ denotes the gradient operator defined on $V$ and $\nabla_V^*: \L^2(\Omega)^n \to V^*$ is its adjoint. As a matter of fact, $\A(t) = \nabla_V^* A(t, \cdot) \nabla_V$. Here, and throughout, we identify $A(t,\cdot)$ with the corresponding multiplication operator on $\L^2(\Omega)^n$. 

In the following we write $\H^{1/2}(\IR; H)$ for the Hilbert space of all $u \in \L^2(\IR; H)$ with $\dhalf u \in \L^2(\IR; H)$, keeping in mind that $H = \L^2(\Omega)$. We define the `energy space'
\begin{align*}
 \E := \H^{1/2}(\IR; H) \cap \L^2(\IR; V)
\end{align*}
equipped with the Hilbertian norm
\begin{align*}
 \|u\|_\E^2 := \int_\IR \|u(t)\|_{\L^2(\Omega)}^2 + \|\dhalf u(t)\|_{\L^2(\Omega)}^2 + \|\nabla_V u(t)\|_{\L^2(\Omega)^n}^2 \d t.
\end{align*}
From Plancherel's theorem we obtain $\H^1(\IR; V^*) \cap \L^2(\IR; V) \subseteq \E$ with continuous embedding. By density of this embedding (note that $\H^1(\IR; V) \subseteq \E$) the bounded parabolic operator 
\begin{align*}
 \fL: \H^1(\IR; V^*) \cap \L^2(\IR; V) \to \L^2(\IR; V^*), \quad \fL u := u' + \A u
\end{align*}
naturally extends its action to a bounded operator $\E \to \E^*$, also denoted by $\fL$, via
\begin{align*}
 \fL(u)(w) = \int_\IR - (\dhalf u \,\big| \, \dhalf \HT w)_H + \langle \A u, w \rangle \d t \qquad (u, w \in \E).
\end{align*}
Here, $\HT$ denotes the Hilbert transform defined on $\L^2(\IR; H)$ by the Fourier symbol $\i \sgn(\tau)$. Besides other things, the next lemma shows that the part of $\fL$ in $\L^2(\IR; H)$ is maximal accretive. We consider this an easy though fundamental observation in the field of non-autonomous parabolic problems. It implicitly appeared in \cite{Dier-Zacher, Kaj} without being mentioned.

\begin{lem}
\label{Lem: WP on R}
Let $\theta \in \IC$ with $\Re \theta > 0$. The following assertions hold.
\begin{enumerate}
 \item For each $f \in \E^*$ there exists a unique $u \in \E$ such that $(\theta + \fL) u = f$. Moreover,
  \begin{align*}
  \|u\|_\E \leq \sqrt{2} \max \Big\{\frac{\Lambda +1}{\lambda}, \frac{|\Im \theta| + 1}{\Re \theta} \Big\} \|f\|_{\E^*}.
  \end{align*}
  
  \item If in addition $f \in \L^2(\IR; V^*)$, then $u \in \H^1(\IR; V^*)$ and $u$ is the unique solution of the non-autonomous problem
  \begin{align*}
  u'(t) + \theta u(t) + \A(t)u(t) = f(t) \qquad (t \in \IR)
  \end{align*}
  in the class $\H^1(\IR; V^*) \cap \L^2(\IR; V)$.
  
  \item If even $f \in \L^2(\IR; H)$, then
  \begin{align*}
  \|u\|_{\L^2(\IR; H)} \leq \frac{1}{\Re \theta}\|f\|_{\L^2(\IR; H)}.
  \end{align*}
  In particular, the part of $\fL$ in $\L^2(\IR; H)$ is maximal accretive with domain 
  \begin{align*}
   \mathsf{D} = \{u \in \H^1(\IR; V^*) \cap \L^2(\IR; V): \fL u \in \L^2(\IR; H)\}.
  \end{align*}
\end{enumerate}
\end{lem}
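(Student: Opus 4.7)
The plan is to set up an inf-sup / Lax--Milgram argument on the Hilbert space $\E$ for the sesquilinear form $b(u, w) := \theta(u, w)_{\L^2(\IR; H)} + \fL(u)(w)$, corrected with a test function $w = u + \mu \HT u$ to recover the $\dhalf$-norm which is invisible to $\Re b(u,u)$. Boundedness $|b(u,w)| \le C_\theta \|u\|_\E \|w\|_\E$ is immediate from the definition of $\fL$, using that $\HT$ commutes with $\dhalf$ and with $\nabla_V$ and has norm $1$ on $\L^2(\IR; H)$.

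The heart of the matter is coercivity for (i). I would first record three Plancherel identities in $t$: (a) $-(\dhalf u\,|\,\dhalf \HT u)_{\L^2(\IR;H)}$ is purely imaginary, so $\Re \fL(u)(u) \ge \lambda \|\nabla_V u\|_{\L^2(\IR;\IC^n)}^2$ by divergence-form ellipticity; (b) since $\HT^2 = -I$, one has $-(\dhalf u\,|\,\dhalf \HT^2 u)_{\L^2(\IR;H)} = \|\dhalf u\|_{\L^2(\IR;H)}^2$, whence $\Re \fL(u)(\HT u) \ge \|\dhalf u\|_{\L^2(\IR;H)}^2 - \Lambda \|\nabla_V u\|^2$ (bounding $\langle \A u, \HT u\rangle$ by boundedness of $\a$ and using that $\HT$ commutes with $\nabla_V$); and (c) $(u,\HT u)_{\L^2(\IR;H)}$ is purely imaginary, so $\Re\bigl(\theta(u,\HT u)_{\L^2(\IR;H)}\bigr) \ge -|\Im\theta|\,\|u\|_{\L^2(\IR;H)}^2$. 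Testing $b$ against $w = u + \mu \HT u$ therefore gives
\[
\Re b(u, w) \ge (\Re\theta - \mu|\Im\theta|)\|u\|_{\L^2(\IR;H)}^2 + (\lambda - \mu\Lambda)\|\nabla_V u\|^2 + \mu \|\dhalf u\|_{\L^2(\IR;H)}^2.
\]
Picking $\mu = \min\{\lambda/(\Lambda+1),\,\Re\theta/(|\Im\theta|+1)\}$ makes every coefficient on the right at least $\mu$, so $\Re b(u,w) \ge \mu \|u\|_\E^2$. A parallel Plancherel computation shows that the three components of the inner product $(u,\HT u)_\E$ are each purely imaginary, hence the cross term in $\|w\|_\E^2$ vanishes and $\|w\|_\E^2 = \|u\|_\E^2 + \mu^2\|\HT u\|_\E^2 \le 2\|u\|_\E^2$ (one has $\mu \le 1$ because $\Lambda \ge \lambda$ forces the above max to be $\ge 1$). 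This yields the stated bound $\|u\|_\E \le (\sqrt 2 / \mu)\|f\|_{\E^*}$, and the symmetric argument applied to the adjoint form $(u,v)\mapsto \overline{b(v,u)}$ gives the analogous inf-sup bound from the other side, so $\theta + \fL \colon \E \to \E^*$ is an isomorphism.

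For (ii), I would use $\L^2(\IR; V^*) \hookrightarrow \E^*$ to read the equation as $u' = f - \theta u - \A u \in \L^2(\IR; V^*)$ in the sense of $\cS'(\IR; V^*)$, so $u \in \H^1(\IR; V^*)$; uniqueness in the smaller class is automatic since $\H^1(\IR; V^*) \cap \L^2(\IR; V) \hookrightarrow \E$ continuously. For (iii), I test the equation directly against $u$: the standard duality lemma renders $t \mapsto \|u(t)\|_H^2$ absolutely continuous with derivative $2\Re\langle u', u\rangle_{V^*, V} \in \L^1(\IR)$, so $\|u\|_H^2 \in W^{1,1}(\IR) \subseteq C_0(\IR)$ and the boundary contribution at $\pm\infty$ vanishes upon integrating. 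Combined with $\Re \a(t, u, u) \ge 0$ (no zeroth-order term in divergence form), what remains is $\Re\theta \|u\|_{\L^2(\IR; H)}^2 \le \|f\|_{\L^2(\IR; H)}\|u\|_{\L^2(\IR; H)}$. Maximal accretivity of the part of $\fL$ in $\L^2(\IR; H)$ then combines the accretivity inequality read off the same calculation (taking $\theta = 0$) with the surjectivity of $\theta + \fL$ onto $\L^2(\IR; H)$ furnished by (i) and (ii). \emph{The main obstacle} is the coercivity step: the naive choice $w = u$ loses the $\dhalf$-norm entirely, and one must correct by $\mu \HT u$ to recapture it through $\HT^2 = -I$.
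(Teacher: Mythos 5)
Your proposal is correct and follows essentially the same route as the paper: the hidden coercivity of $\theta+\fL$ is revealed by testing against $(1+\delta \HT)u$ (your $u+\mu\HT u$), with the same choice $\delta=\min\{\lambda/(\Lambda+1),\Re\theta/(|\Im\theta|+1)\}$ and the same constants; the paper packages this as Lax--Milgram for the twisted form $\fe(v,w)=(\theta+\fL)(v)\bigl((1+\delta\HT)w\bigr)$ rather than as an inf-sup condition, and proves (iii) by taking $w=u$ in the weak formulation instead of invoking absolute continuity of $t\mapsto\|u(t)\|_H^2$, but these are cosmetic differences.
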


\begin{proof}
The proof following \cite{Kaj, Dier-Zacher} relies on some hidden coercivity of the parabolic operator $\fL$. It can be revealed using the Hilbert transform $\HT$. We define the sesquilinear form $\fe: \E \times \E \to \IC$ by
\begin{align*}
 \fe(v,w) = \int_\IR - (\dhalf v \mid \dhalf \HT(1+\delta \HT)w )_H + \langle (\theta + \A) v, (1+\delta \HT)w \rangle \d t,
\end{align*}
with $\delta > 0$ still to be chosen. Clearly $\fe$ is bounded. Since $\HT$ is skew-adjoint,
\begin{align*}
 \Re \int_\IR  (f \mid \HT f)_H \d t = 0 \qquad (f \in \L^2(\IR; H)).
\end{align*}
Using this along with the ellipticity of $\a$, we find for all $v \in \E$ that
\begin{align*}
 \Re \fe(v,v) 
\geq \int_\IR \delta \|\dhalf v\|_{\L^2(\Omega)}^2 + (\lambda - \delta \Lambda)\|\nabla v\|_{\L^2(\Omega)^n}^2 + (\Re \theta - \delta |\Im \theta|) \|v\|_{\L^2(\Omega)}^2 \d t.
\end{align*}
Choosing $\delta$ such that the factors in front of the second and third term are no less than $\delta$, we obtain the coercivity estimate
\begin{align*}
 \Re \fe(v,v) \geq \min \Big \{\frac{\lambda}{\Lambda + 1}, \frac{\Re \theta}{|\Im \theta|+1} \Big \} \|v\|_\E^2 \qquad (v \in \E).
\end{align*}
The Lax-Milgram lemma yields for each $f \in \E^*$ a unique $u \in \E$ with bound as required in (i) such that
\begin{align*}
 \fe(u,w) = f((1+\delta \HT)w) \qquad (w \in \E). 
\end{align*}
Since $\delta < 1$, Plancherel's theorem yields that $1+\delta \HT$ is an isomorphism on $\E$. Thus,
\begin{align*}
 \int_\IR - (\dhalf u \mid \dhalf \HT w)_H + \langle (\theta + \A) u, w \rangle \d t = f(w) \qquad (w \in \E),
\end{align*}
that is, $(\theta + \fL) u  = f$. This completes the proof of (i). If in addition $f \in \L^2(\IR; V^*)$, then the previous identity with $w \in \H^1(\IR; V)$ rewrites
\begin{align*}
 \int_\IR - \langle u, w'\rangle + \langle (\theta + \A) u, w \rangle \d t = \int_\IR \langle f, w \rangle \; \d t,
\end{align*}
thereby proving $u \in \H^1(\IR; V^*)$. As $\H^1(\IR; V)$ is dense in $\L^2(\IR; V)$, we have $u' + \theta u + \A u = f$ as a pointwise equality in $\L^2(\IR; V^*)$. Since $\H^1(\IR; V^*) \cap \L^2(\IR; V) \subseteq \E$, the first part of the proof gives of course uniqueness of $u$ in the smaller space. This proves (ii).

Finally, we prove (iii). From the first two items we can infer that $\theta + \fL: \mathsf{D} \to \L^2(\IR; H)$ is one-to-one. In order to check the resolvent estimate required for maximal accretivity, let $f \in \L^2(\IR; H)$. Then, by accretivity of $A$ and skew-adjointness of the Hilbert transform
\begin{align*}
 \Re \theta \|u\|_{\L^2(\IR; H)}^2 
 &\leq \Re \int_\IR - (\dhalf u \mid \HT \dhalf u)_H + \langle (\theta + \A)u, u \rangle \d t \\
 &= \Re \int_\IR (f \mid u)_H \d t \leq \|f\|_{\L^2(\IR; H)} \|u\|_{\L^2(\IR; H)}. \qedhere
\end{align*}
\end{proof}

Next, we provide an easy `improved time-regularity result' under the additional assumption that $A$ satisfies the Lipschitz condition
\begin{align*}
 |A(t,x) - A(s,x)| \leq C |t-s| \qquad (\text{a.e.\ $x \in \Omega$, $s, t \in \IR$})
\end{align*}
for some $C>0$. Later we will apply this result \emph{qualitatively} to smoothened versions of $A$ in order to justify some of our calculations.

\begin{lem}
\label{Lem: Higher regularity}
Let $f \in \H^1(\IR; V^*)$ and suppose that $A$ satisfies the Lipschitz condition. Then the solution $u = (1+\fL)^{-1}f$ provided by Lemma~\ref{Lem: WP on R} belongs to $\H^1(\IR; V)$.
\end{lem}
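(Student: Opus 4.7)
The plan is to extract time regularity by a difference-quotient argument using Lemma~\ref{Lem: WP on R} applied to the time-translated problem. For $h \neq 0$, set $v_h(t) := h^{-1}(u(t+h)-u(t))$ and $g_h(t) := h^{-1}(f(t+h)-f(t)) - h^{-1}(\A(t+h)-\A(t)) u(t)$. Writing the equation $u'+u+\A u = f$ at time $t+h$ and subtracting from the equation at time $t$, one obtains after dividing by $h$ the shifted non-autonomous equation
\begin{align*}
 v_h'(t) + v_h(t) + \A(t+h) v_h(t) = g_h(t) \qquad (t \in \IR).
\end{align*}
This equation is of the form handled by Lemma~\ref{Lem: WP on R} with $\theta=1$ and with the coefficient matrix $A(\cdot+h, \cdot)$, which obviously satisfies the same ellipticity and boundedness bounds as $A$.

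The next step is to check that $g_h$ is uniformly bounded in $\L^2(\IR; V^*)$ as $h \to 0$. The first term $h^{-1}(f(\cdot+h)-f(\cdot))$ is uniformly bounded in $\L^2(\IR; V^*)$ by $\|f'\|_{\L^2(\IR; V^*)}$, since $f \in \H^1(\IR; V^*)$. For the second term, the Lipschitz assumption translates (via the equivalence $\|\A(t)\|_{V \to V^*} \approx \essup_x |A(t,x)|$ recalled in the introduction) into $\|\A(t+h)-\A(t)\|_{V \to V^*} \leq C|h|$, so that $\|h^{-1}(\A(t+h)-\A(t))u(t)\|_{V^*} \leq C \|u(t)\|_V$, which is in $\L^2(\IR)$ because $u \in \L^2(\IR; V)$ by Lemma~\ref{Lem: WP on R}.

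Applying Lemma~\ref{Lem: WP on R}(i) to the shifted equation (with the continuous embedding $\L^2(\IR; V^*) \hookrightarrow \E^*$), we obtain
\begin{align*}
 \|v_h\|_\E \leq C \|g_h\|_{\L^2(\IR; V^*)} \leq C'
\end{align*}
with constants independent of $h$. In particular, $\|v_h\|_{\L^2(\IR; V)}$ is uniformly bounded. Since $v_h \to u'$ in $\mathcal{D}'(\IR; V^*)$ as $h \to 0$, a weak-compactness argument in the reflexive Hilbert space $\L^2(\IR; V)$ identifies the weak limit of a subsequence with $u'$, yielding $u' \in \L^2(\IR; V)$. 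Combined with $u \in \L^2(\IR; V)$ this gives $u \in \H^1(\IR; V)$, as required.

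I expect no serious obstacle here: the Lipschitz hypothesis precisely guarantees that the commutator-type error term $h^{-1}(\A(t+h)-\A(t))u(t)$ stays in $\L^2(\IR; V^*)$, and the uniform energy estimate from Lemma~\ref{Lem: WP on R} does the rest. The mild subtlety is to verify that the shifted form $\a(\cdot+h, \cdot, \cdot)$ is still admissible for Lemma~\ref{Lem: WP on R}, which is immediate from translation invariance of the ellipticity constants.
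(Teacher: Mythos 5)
Your proof is correct and follows essentially the same difference-quotient argument as the paper; the only cosmetic difference is that you split the product as $\A(t+h)v_h + h^{-1}(\A(t+h)-\A(t))u(t)$ and invoke Lemma~\ref{Lem: WP on R} for the translated coefficients $A(\cdot+h,\cdot)$, whereas the paper keeps the operator $\A(t)$ fixed and puts the error on $u(t+h)$, so it can reuse the isomorphism $1+\fL:\E\to\E^*$ directly. Both variants work, since the ellipticity constants are translation invariant.
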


\begin{proof}
The proof is a straightforward application of the method of difference quotients. For $h \in \IR$ define
\begin{align*}
 D_h u(t) = \frac{1}{h}(u(t+h) - u(t)) \qquad (t \in \IR)
\end{align*}
and similarly define $D_h f$ and $D_h \A$. Since $u' + u + \A u = f$ on the real line, subtracting the equations for $u(t+h)$ and $u(t)$ shows that $D_h u$ is the unique solution of
\begin{align*}
 (D_h u)'(t) + D_hu(t) + \A(t) D_h u(t) = D_h f(t) - D_h \A (t) u(t+h)
\end{align*}
in $\H^1(\IR; V^*) \cap \L^2(\IR; V)$. The right-hand side can be bounded in $\L^2(\IR; V^*)$ uniformly in $h$ as we obtain 
\begin{align*}
 \int_\IR \|D_h f(t)\|_{V^*}^2 \d t \leq \int_\IR \|f'(t)\|_{V^*}^2 \d t
\end{align*}
and 
\begin{align*}
 \int_\IR \|D_h \A (t) u(t+h) \|_{V^*}^2 \d t \leq C^2 \int_\IR \|u(t)\|_V^2 \d t
\end{align*}
from our assumptions. Since $1+\fL: \E \to \E^*$ is an isomorphism, see Lemma~\ref{Lem: WP on R}, the norm $\|D_h u\|_{\L^2(\IR; V)}$ can be bounded uniformly in $h$ as well. Hence, $\{D_h u\}_{h \in \IR}$ has a weak limit point $v \in \L^2(\IR; V)$ which straightforwardly reveals itself as the weak derivative of $u$.
\end{proof}

We continue by quoting a commutator estimate due to Murray \cite[Thm.~3.3]{Murray}. The reader may also see it as a consequence of the famous $T(1)$-theorem of David and Journ\'{e} \cite{T1}.

\begin{prop}
\label{Prop: Murray commutator}
Let $a: \IR \to \IR$ be bounded and Lipschitz continuous. Then the commutator $[a, \dhalf] = a \dhalf - \dhalf a$ extends from $\H^{1/2}(\IR)$ to a bounded operator on $\L^2(\IR)$ if and only if $\dhalf a \in \BMO(\IR)$. In this case its norm is controlled by $\|\dhalf a\|_{\BMO(\IR)}$. 
\end{prop}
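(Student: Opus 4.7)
The plan is to derive Proposition~\ref{Prop: Murray commutator} from the David-Journé $T(1)$ theorem applied to $T := [a, \dhalf]$. Since $\dhalf$ has Fourier symbol $|\tau|^{1/2}$, its distributional kernel away from the diagonal is a constant multiple of $|t-s|^{-3/2}$, so the Schwartz kernel of $T$ off the diagonal is
\begin{align*}
K(t,s) = c_0\,\frac{a(t)-a(s)}{|t-s|^{3/2}}.
\end{align*}
Combining the Lipschitz bound on $a$ for $|t-s|$ small with the $\L^\infty$ bound for $|t-s|$ large yields the standard Calderón-Zygmund size and Hölder-regularity estimates for $K$; the boundedness and Lipschitz continuity of $a$ also make $T$ a priori bounded on $\L^2$, so that the distributional manipulations below are rigorous.

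Next I would compute $T(1)$ and $T^*(1)$. Since $\dhalf 1 = 0$ in $\cS'/\mathcal P$ and $a$ is real-valued (so that $\dhalf$ is self-adjoint), one formally obtains $T(1) = -\dhalf a$ and $T^*(1) = \dhalf a$, both in $\BMO(\IR)$ by hypothesis. After verifying the Weak Boundedness Property from the near-diagonal kernel bound on $K$, the David-Journé theorem yields $\L^2$-boundedness of $T$ with $\|T\|_{\L^2 \to \L^2} \leq C\|\dhalf a\|_{\BMO(\IR)}$. Conversely, if $T$ extends to a bounded operator on $\L^2$, then standard Calderón-Zygmund theory forces $T(1) \in \BMO$, and the identification $T(1) = -\dhalf a$ then gives $\dhalf a \in \BMO(\IR)$.

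The main obstacle I expect is refining the final bound so that only $\|\dhalf a\|_{\BMO(\IR)}$ appears, rather than a combination also involving $\|a\|_{\L^\infty}$ and the Lipschitz norm of $a$ (which would naively enter through the intermediate Calderón-Zygmund estimates on $K$). This requires exploiting the special structure of the commutator, for instance through a Bony-type paraproduct decomposition of $af$ in which the only piece carrying a BMO-quantity is a paraproduct against $\dhalf a$, while the remaining terms are controlled by absolute cancellation independent of the regularity of $a$. A secondary technical point is the rigorous interpretation of $T(1) = -\dhalf a$ modulo constants, since $1 \notin \L^2$; this is carried out in the standard way by approximating $1$ by smooth bumps $\varphi_R$ and passing to the limit when paired with atoms of $\H^1(\IR)$, using the far-field decay $|K(t,s)| \lesssim \|a\|_{\L^\infty} |t-s|^{-3/2}$ to justify the limit.
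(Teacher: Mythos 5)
The paper does not actually prove this proposition: it is quoted from Murray \cite[Thm.~3.3]{Murray}, with the $T(1)$ theorem mentioned only as an alternative justification. Your proposal fleshes out precisely that alternative route, and its structural steps are sound: the off-diagonal kernel of $[a,\dhalf]$ is indeed $c_0(a(t)-a(s))|t-s|^{-3/2}$ (and, because $a(t)-a(s)$ vanishes to first order on the diagonal, this kernel is locally integrable, which also gives the a priori $\L^2$-boundedness you claim, by Schur's test); the computations $T(1)=-\dhalf a$ and $T^*(1)=\dhalf a$ (using $T^*=-T$ for real $a$) are correct; and the converse direction via $T:\L^\infty\to\BMO$ is standard.

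The one substantive issue is the quantitative bound by $\|\dhalf a\|_{\BMO(\IR)}$ alone, which you correctly identify as the obstacle but leave unresolved; note that this sharp dependence is exactly what the paper needs, since in the approximation step of Proposition~\ref{Prop: key} the mollified coefficients have Lipschitz constants blowing up while only their $\BMO$-Sobolev norms stay bounded. However, the paraproduct machinery you propose is heavier than necessary. The Strichartz characterization already invoked in Section~\ref{Subsec: Comparison} shows that $\dhalf a\in\BMO(\IR)$ forces $|a(t)-a(s)|\leq C\|\dhalf a\|_{\BMO(\IR)}|t-s|^{1/2}$, so the Calder\'on--Zygmund size and regularity constants of $K$ are themselves controlled by $\|\dhalf a\|_{\BMO(\IR)}$ rather than by $\|a\|_{\L^\infty}$ and the Lipschitz norm; and since $K$ is antisymmetric, the weak boundedness property comes for free from the size bound with the same constant (pair $T$ against bumps and antisymmetrize the integrand, which produces the extra factor $|t-s|/R$ needed for convergence). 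With these two observations every input to the $T(1)$ theorem is $\lesssim\|\dhalf a\|_{\BMO(\IR)}$, and the Lipschitz and boundedness hypotheses are used only qualitatively, to make sense of $T$ on $\H^{1/2}(\IR)$ and to justify the identification of $T(1)$ with $-\dhalf a$. With that replacement your outline becomes a complete and correct proof along the route the paper gestures at.
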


Since $\L^2(\IR; H) = \L^2(\Omega; \L^2(\IR))$ by Fubini's theorem, we obtain a similar result on $\L^2(\IR; H)$ for free. This is the part of the argument that crucially uses that $H$ is not an arbitrary Hilbert space.

\begin{cor}
\label{Cor: multidimensional commutator}
Suppose that $A$ satisfies the Lipschitz condition and that there exists a constant $M^\natural$ such that $\|\dhalf A(\cdot, x)\|_{\BMO(\IR)^{n \times n}} \leq M^\natural$ for a.e.\ $x \in \Omega$. Then the commutator estimate
\begin{align*}
 \|[A, \dhalf] u\|_{\L^2(\IR; H)^n} \leq C M^\natural \|u\|_{\L^2(\IR; H)^n} \qquad (u \in \H^{1/2}(\IR; H)^n)
\end{align*}
holds true with $C$ depending only on the dimension $n$.
\end{cor}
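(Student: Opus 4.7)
The plan is to reduce the matrix-valued multidimensional estimate to Murray's scalar commutator bound (Proposition~\ref{Prop: Murray commutator}) fiber by fiber in the spatial variable, using Fubini's theorem as advertised.

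First, using $\L^2(\IR;H) = \L^2(\IR;\L^2(\Omega)) = \L^2(\Omega;\L^2(\IR))$, I view any $u = (u_1,\ldots,u_n) \in \L^2(\IR;H)^n$ as a measurable family $\{u(\cdot,x)\}_{x \in \Omega}$ with $u(\cdot,x) \in \L^2(\IR)^n$ for a.e.\ $x \in \Omega$, and similarly for $\dhalf u$ (the fractional derivative acts only in $t$, so it commutes with the Fubini identification). The matrix $A(t,x)$ multiplies pointwise in $x$, so writing $A = (a_{ij})$ one has, for a.e.\ $x \in \Omega$ and every $t \in \IR$,
\begin{align*}
 \bigl([A, \dhalf] u\bigr)_i(t,x) = \sum_{j=1}^n \bigl[a_{ij}(\cdot,x),\, \dhalf\bigr] u_j(\cdot,x)\, (t).
\end{align*}

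Next, fix $x$ in the full-measure subset of $\Omega$ on which every entry $a_{ij}(\cdot,x)$ is bounded and Lipschitz (from the standing assumption on $A$) and satisfies $\|\dhalf a_{ij}(\cdot,x)\|_{\BMO(\IR)} \leq \|\dhalf A(\cdot,x)\|_{\BMO(\IR)^{n \times n}} \leq M^\natural$. Applying Proposition~\ref{Prop: Murray commutator} to each scalar function $a_{ij}(\cdot,x)$ and each component $u_j(\cdot,x) \in \H^{1/2}(\IR)$ yields
\begin{align*}
 \bigl\|[a_{ij}(\cdot,x), \dhalf] u_j(\cdot,x)\bigr\|_{\L^2(\IR)} \leq C_0\, M^\natural\, \|u_j(\cdot,x)\|_{\L^2(\IR)},
\end{align*}
where $C_0$ is the universal constant from Murray's estimate. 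Summing in $i,j$ via the triangle and Cauchy--Schwarz inequalities gives, for a.e.\ $x \in \Omega$,
\begin{align*}
 \bigl\|[A,\dhalf] u(\cdot,x)\bigr\|_{\L^2(\IR)^n}^2 \leq n^2 C_0^2\, (M^\natural)^2\, \|u(\cdot,x)\|_{\L^2(\IR)^n}^2.
\end{align*}

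Finally, integrating this bound in $x$ over $\Omega$ and invoking Fubini once more on both sides yields the required estimate with $C = n C_0$, depending only on the dimension $n$. The argument is essentially bookkeeping and no step is genuinely delicate; the only thing to be careful about is that $\dhalf$ and the matrix multiplication $A$ both respect the Fubini decomposition (one because it acts solely in $t$, the other because it acts fiberwise in $x$), which justifies applying the scalar theorem $n^2$ times and summing. The crucial input that makes this possible — and the reason the target space $H$ can be arbitrary here, while general vector-valued extensions of Murray's theorem would cost an $\eps$ of regularity — is precisely that the matrix $A(\cdot,x)$ takes values in the finite-dimensional space $\IC^{n\times n}$.
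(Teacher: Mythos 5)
Your proposal is correct and follows exactly the paper's argument: identify $\L^2(\IR;H)$ with $\L^2(\Omega;\L^2(\IR))$ via Fubini, apply Murray's scalar estimate (Proposition~\ref{Prop: Murray commutator}) entry-wise to the matrix $A(\cdot,x)$ for a.e.\ fixed $x$, and integrate in $x$; the paper's own proof is just a two-line version of this same reduction. The only cosmetic caveat is that the comparison of $\|\dhalf a_{ij}(\cdot,x)\|_{\BMO(\IR)}$ with the matrix norm $\|\dhalf A(\cdot,x)\|_{\BMO(\IR)^{n\times n}}$ may cost a dimensional constant depending on the convention, which is harmless since $C$ is allowed to depend on $n$.
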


\begin{proof}
If $u \in \H^{1/2}(\IR; H)^n$, then $u(\cdot,x) \in \H^{1/2}(\IR)^n$ for a.e.\ $x \in \Omega$. Hence, the commutator $[A(\cdot, x), \dhalf] u(\cdot, x)$ is \emph{a priori} defined and the claim follows on applying Murray's estimate coordinate-wise and integrating with respect to $x$.
\end{proof}

We are in a position to prove our main result on maximal regularity of the non-autonomous problem on the real line.

\begin{prop}
\label{Prop: key}
If there is a constant $M^\natural$ such that $\|\dhalf A(\cdot, x)\|_{\BMO(\IR)^{n \times n}} \leq M^\natural$ for a.e.\ $x \in \Omega$, then for every $f \in \L^2(\IR; H)$ the solution $u = (1+\fL)^{-1}f$ of the non-autonomous problem 
\begin{align*}
 u'(t) + u(t) + \A(t)u(t) = f(t) \qquad (t \in \IR)
\end{align*}
belongs to $\H^1(\IR; H)$. More precisely, for some $C>0$ depending on $M^\natural$, $\lambda$, $\Lambda$, and $n$,
\begin{align*}
 \|u\|_{\H^1(\IR; H)} + \|u\|_{\H^{1/2}(\IR; V)} \leq C \|f\|_{\L^2(\IR; H)}.
\end{align*}
\end{prop}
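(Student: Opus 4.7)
The idea is to derive a parabolic equation for $v := \dhalf u$, apply Lemma~\ref{Lem: WP on R}(i) to it, and absorb the spatial-temporal coupling through a commutator estimated by Corollary~\ref{Cor: multidimensional commutator}. A preliminary time-regularization of $A$ is needed to justify the algebra: I would set $A_\eps := \rho_\eps *_t A$ for a smooth approximate identity $\rho_\eps$ on $\IR$. Each $A_\eps$ is Lipschitz in $t$, retains the ellipticity constants $\lambda,\Lambda$, and since convolution with a probability measure cannot increase a $\BMO$ norm, it satisfies $\|\dhalf A_\eps(\cdot,x)\|_{\BMO(\IR)^{n\times n}} \le M^\natural$ for a.e.\ $x$. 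The estimate I will derive for the corresponding solutions $u_\eps$ is independent of $\eps$, so a standard weak-compactness argument in $\E \cap \H^1(\IR;H) \cap \H^{1/2}(\IR;V)$ combined with uniqueness in Lemma~\ref{Lem: WP on R} recovers the original solution with the same bound.

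Under the Lipschitz hypothesis, Lemma~\ref{Lem: Higher regularity} gives $u \in \H^1(\IR; V)$, so $v = \dhalf u \in \H^{1/2}(\IR; V) \subseteq \E$. Applying $\dhalf$ to the equation $u' + u + \A u = f$ and using that $\dhalf$ commutes with $\partial_t$ and with $\nabla_V$, together with the Leibniz-type identity
\begin{align*}
\dhalf(\A u) = \nabla_V^* \dhalf(A \nabla_V u) = \A \dhalf u + \nabla_V^* [\dhalf, A] \nabla_V u,
\end{align*}
yields $(1 + \fL) v = \dhalf f - \nabla_V^* [\dhalf, A] \nabla_V u$ in $\E^*$.

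I then bound each piece of the right-hand side in $\E^*$. Via the embedding $\H^{-1/2}(\IR;H) \hookrightarrow \E^*$, Plancherel and self-adjointness of $\dhalf$ give $\|\dhalf f\|_{\E^*} \le \|f\|_{\L^2(\IR;H)}$. Via $\L^2(\IR;V^*) \hookrightarrow \E^*$, Corollary~\ref{Cor: multidimensional commutator} applied entrywise to the matrix-valued $A$ yields
\begin{align*}
\|\nabla_V^* [\dhalf, A] \nabla_V u\|_{\E^*} \le \|[\dhalf, A] \nabla_V u\|_{\L^2(\IR;H)^n} \le C M^\natural \|u\|_{\L^2(\IR;V)}.
\end{align*}
Lemma~\ref{Lem: WP on R}(i) applied to the $v$-equation with $\theta = 1$, combined with the $\E$-bound $\|u\|_\E \le C \|f\|_{\L^2(\IR;H)}$ coming from Lemma~\ref{Lem: WP on R} applied to $u$ (using $\L^2(\IR;H) \hookrightarrow \E^*$), then delivers $\|v\|_\E \le C(1 + M^\natural) \|f\|_{\L^2(\IR;H)}$. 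Since $v = \dhalf u$ and $\dhalf$ commutes with $\nabla_V$, the three terms constituting $\|v\|_\E^2$ are exactly $\|\dhalf u\|_{\L^2(\IR;H)}^2$, $\|u'\|_{\L^2(\IR;H)}^2$, and $\|\dhalf \nabla_V u\|_{\L^2(\IR;H)^n}^2$, which together with the $\E$-bound on $u$ assemble into the claimed estimate.

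The most delicate step is the algebraic identity above: it requires $\nabla_V u \in \H^{1/2}(\IR; \L^2(\Omega)^n)$, which is exactly what the Lipschitz regularization secures before passing to the limit. The deep input is Corollary~\ref{Cor: multidimensional commutator}: because $A$ is matrix-valued (so the target is finite-dimensional) and $H = \L^2(\Omega)$, Murray's scalar $\BMO$-commutator bound can be applied pointwise in $x$ and then integrated, avoiding the `$\eps$ of a derivative' loss that would block any genuinely vector-valued extension from reaching the endpoint $\alpha = \tfrac12$.
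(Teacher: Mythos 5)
Your proposal follows the paper's own proof almost verbatim: mollify in time, use Lemma~\ref{Lem: Higher regularity} to license applying $\dhalf$ to the equation, factor the failure of commutation into $\nabla_V^*[\dhalf,A]\nabla_V u$, control it by Corollary~\ref{Cor: multidimensional commutator}, invoke Lemma~\ref{Lem: WP on R} twice, and pass to the limit. There is, however, one concrete slip in the regularization step: you mollify only $A$, but Lemma~\ref{Lem: Higher regularity} requires $f\in\H^1(\IR;V^*)$ \emph{in addition} to the Lipschitz condition on the coefficients, and your $f$ is merely in $\L^2(\IR;H)$. As written, the assertion that ``Lemma~\ref{Lem: Higher regularity} gives $u\in\H^1(\IR;V)$'' is therefore unjustified, and without it $\nabla_V u$ need not lie in $\H^{1/2}(\IR;H)^n$, so the Leibniz-type identity --- which you correctly single out as the delicate point --- is not licensed. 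The fix is exactly what the paper does: mollify $f$ as well, set $f_\eps=\rho_\eps *_t f$, prove the uniform estimate for the pairs $(A_\eps,f_\eps)$, and use that $f_\eps\to f$ strongly in $\L^2(\IR;H)$.

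A second, smaller point: in the closing weak-compactness step, ``combined with uniqueness in Lemma~\ref{Lem: WP on R}'' conceals the identification of the weak limit as a solution of the \emph{original} equation. Since the operator $\A_\eps$ varies with $\eps$, one must pass to the limit in $\int_\IR\langle \A_\eps u_\eps, v\rangle \,\mathrm{d}t$, which requires $A_\eps^*\nabla_V v\to A^*\nabla_V v$ strongly in $\L^2(\IR;H)^n$; this follows by dominated convergence once one notes that the $\BMO$-Sobolev hypothesis forces $t\mapsto A(t,\cdot)$ to be uniformly continuous into $\L^\infty(\Omega;\IC^{n\times n})$, whence $A_\eps\to A$ pointwise a.e. Everything else --- the sign conventions in the commutator, the bound $\|\dhalf f\|_{\E^*}\le\|f\|_{\L^2(\IR;H)}$, the preservation of ellipticity and of the $\BMO$ bound under convolution with a probability density, and the reassembly of $\|\dhalf u\|_\E$ into the claimed norms --- is correct and coincides with the paper's argument.
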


\begin{proof}
Let us first establish the required estimate under the additional qualitative assumptions that $f \in \H^1(\IR; H)$ and $A$ satisfies the Lipschitz condition. In this case Lemma~\ref{Lem: Higher regularity} guarantees the higher regularity $u \in \H^1(\IR; V)$. Hence $\dhalf u \in \H^{1/2}(\IR; V) \subseteq \E$ and $\dhalf f \in \H^{1/2}(\IR; H) \subseteq \E^*$, which justifies the calculation
\begin{align}
\label{Eq1: key}
 \dhalf u = (1+\fL)^{-1}(\fL \dhalf - \dhalf \fL) u + (1+\fL)^{-1} \dhalf f
\end{align}
as an equality in $\E$. For the first term we use $\fL = \partial_t + \nabla_V^* A \nabla_V$, so that having canceled the commutating terms,
\begin{align}
\label{Eq2: key}
 (\fL \dhalf - \dhalf \fL) u = \nabla_V^*(\dhalf A - A \dhalf) \nabla_V u.
\end{align}
Since $\nabla_V u \in \H^1(\IR; H)^n$, we deduce from Corollary~\ref{Cor: multidimensional commutator} the bound
\begin{align*}
 \|(\fL \dhalf - \dhalf \fL) u \|_{\L^2(\IR; V^*)} 
 &\leq \|(\dhalf A - A \dhalf) \nabla_V u\|_{\L^2(\IR; H)^n} \\
 &\leq CM^\natural \|\nabla_V u\|_{\L^2(\IR; H)^n}.
\end{align*}
Going back to \eqref{Eq1: key}, we now obtain from Lemma~\ref{Lem: WP on R} that
\begin{align*}
 \|u\|_\E + \|\dhalf u\|_\E
 & \leq \sqrt{2} \cdot \frac{\Lambda +1}{\lambda} \Big(\|f\|_{\E^*} + \|(\fL \dhalf - \dhalf \fL) u\|_{\E^*} + \|\dhalf f\|_{\E^*} \Big) \\
 &\leq  \sqrt{2} \cdot \frac{\Lambda +1}{\lambda} \Big(2 \|f\|_{\L^2(\IR; H)} + CM^\natural \|u\|_{\L^2(\IR; V)} \Big).
\intertext{Since $u = (1+\fL)^{-1}f$, the same lemma yields the required estimate}
 &\leq  2\sqrt{2} \cdot \frac{\Lambda +1}{\lambda}\|f\|_{\L^2(\IR; H)} + 2CM^\natural \bigg(\frac{\Lambda +1}{\lambda}\bigg)^2 \|f\|_{\L^2(\IR; H)}.
\end{align*}

Now, we treat the general case by approximation. To this end let $\rho: \IR \to [0,\infty)$ be smooth with compact support such that $\int_\IR \rho(t) \d t = 1$ and let $\rho_n(t) = n \rho(nt)$. Set $f_n := \rho_n \ast_t f$ and $A_n := \rho_n \ast_t A$, the symbol $\ast_t$ indicating the convolution on $\IR$. Clearly $f_n \in \H^1(\IR; H)$ and $A_n$ satisfies the Lipschitz condition. Also
\begin{align*}
 A_n(t,x) \xi \cdot \cl{\zeta}  = \int_\IR \rho_n(t-s) A(s,x) \xi \cdot \cl{\zeta} \d s
\end{align*}
for all $\xi, \zeta \in \IC^n$, where by construction $\rho_n$ is a positive functions with integral equal to $1$. Thus, these coefficients are bounded and coercive with the same parameters $\Lambda$ and $\lambda$ as is $A$. Similarly,
\begin{align*}
 \|\dhalf A_n(\cdot, x) \|_{\BMO(\IR)^{n \times n}} = \|\rho_n \ast_t \dhalf A(\cdot, x)\|_{\BMO(\IR)^{n \times n}} \leq M^\natural
\end{align*}
for a.e.\ $x \in \Omega$, using the translation invariance of the $\BMO$-norm and the assumption on $A$ in the second step. So, if $u_n$ is the solution of 
\begin{align*}
 u_n'(t) + u_n(t) + \A_n(t)u_n(t) = f_n(t) \qquad (t \in \IR),
\end{align*}
then from the first part of the proof we can infer that $\{u_n\}_n$ is a bounded sequence in $\H^1(\IR; H) \cap \H^{1/2}(\IR; V)$. Upon passing to a subsequence, we can assume that it has a weak limit $u \in \H^1(\IR; H) \cap \H^{1/2}(\IR; V)$. Given $v \in \L^2(\IR; V)$, we thus have
\begin{align*}
 \int_\IR (u_n' + u_n - f_n \mid v)_H  \d t 
 = -\int_\IR  \langle \A_n u_n, v \rangle \d t
 = -\int_\IR  (\nabla_V u_n \mid A_n^* \nabla_V v)_H \d t
\end{align*}
and in order to reveal $u$ as the sought-after solution we have to pass to the limit $n \to \infty$. 

This is easy for the left-hand side since $u_n' \to u'$ and $u_n \to u$ weakly and $f_n \to f$ strongly, all taking place in $\L^2(\IR; H)$. For the right-hand side we recall from Section~\ref{Subsec: Comparison} that $A$, when regarded as a function in $t$, is uniformly continuous with values in $\L^\infty(\Omega; \IC^{n \times n})$. Hence, $A^*_n(t,x) \to A^*(t,x)$ for a.e.\ $(t,x) \in \IR \times \Omega$. Eventually, the dominated convergence theorem tells us $A_n^* \nabla v \to \nabla v$ in $\L^2(\IR; H)$ and thus we can pass to the limit on the right-hand side, too.
\end{proof}

\begin{rem}
Recall that an equivalent formulation of the assumption in Proposition~\ref{Prop: key}, more in the spirit of Theorem~\ref{Thm: Main}, has been discussed in Section~\ref{Subsec: Comparison}.
\end{rem}

\begin{rem} A similar analysis can be performed for the homogeneous equation $u' + \A u = f$ on the real line, using a homogeneous version $\dot \E$ of the energy space, so that $\fL: \dot \E \to \dot \E^*$ becomes bounded and invertible. Under the assumption of Proposition \ref{Prop: key} we formally obtain that for $f\in \L^2(\IR; H) \cap \dot \E^*$ the solution $u=\fL^{-1} f$ has higher regularity
\begin{align*}
\|\partial_{t}u\|_{\L^2(\IR; H)} + \|\nabla_{V}\dhalf u\|_{\L^2(\IR; H)} \leq C (\|f\|_{\L^2(\IR; H)} + \|f\|_{{\dot \E}^*}).
\end{align*}
The care of making this observation rigorous is left to the interested reader. It will not be needed in the following.
\end{rem}
\section{The proof of the main result}
\label{Sec: proof}

\noindent In this section we give the proof of Theorem~\ref{Thm: Main} by reduction to the non-autonomous problem on the real line. This will mainly rely on the following extension lemma to the effect that an elliptic coefficient function satisfying the assumption of Theorem~\ref{Thm: Main} can be extended to a function on $\IR \times \Omega$ with half $t$-derivative in $\BMO$.

\begin{lem}
\label{Lem: extension} 
Suppose that $A: [0,T] \times \Omega \to \IC^{n \times n}$ satisfies \eqref{Eq: ellipticity} and \eqref{Eq: Ass A}. Then $A$ can be extended to a map $A^\natural: \IR \times \Omega \to \IC^{n \times n}$ in such a way that the ellipticity bounds \eqref{Eq: ellipticity} remain true for all $t \in \IR$ and 
\begin{align*}
\|\dhalf A^\natural(\cdot, x)\|_{\BMO(\IR)^{n \times n}} \leq M^\natural \qquad (\text{a.e.\ $x \in \Omega$})
\end{align*}
for some constant $M^\natural$ depending on $M$, $T$, $\Lambda$, $\lambda$, and $n$.
\end{lem}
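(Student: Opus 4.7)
The plan is to construct $A^\natural$ explicitly by reflecting $A$ across the endpoints of $[0,T]$ and continuing by constant matrices, and then to verify that the Campanato-type condition (\ref{Eq: Ass A}) persists on all of $\IR$. Strichartz's characterization of $\BMO$-Sobolev spaces will then convert this bound into the claimed estimate on $\dhalf A^\natural(\cdot,x)$.

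First, I would invoke Strichartz's theorem (as recalled in Section~\ref{Subsec: Comparison}): condition (\ref{Eq: Ass A}) forces $t\mapsto A(t,x)$ to have a $1/2$-Hölder continuous representative on $[0,T]$ for a.e.\ $x\in\Omega$, with Hölder constant controlled by $M$. In particular the boundary values $A(0,x)$ and $A(T,x)$ are well-defined $(\lambda,\Lambda)$-elliptic matrices, and I would set
\begin{equation*}
A^\natural(t,x):=\begin{cases}
A(T,x),&t\leq-T,\\
A(-t,x),&-T\leq t\leq 0,\\
A(t,x),&0\leq t\leq T,\\
A(2T-t,x),&T\leq t\leq 2T,\\
A(0,x),&t\geq 2T.
\end{cases}
\end{equation*}
By construction $A^\natural(\cdot,x)$ is continuous on $\IR$, constant outside $[-T,2T]$, and at every point equals $A(s,x)$ for some $s\in[0,T]$, so the ellipticity bounds (\ref{Eq: ellipticity}) persist on all of $\IR$.

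The bulk of the work is then to show
\begin{equation*}
\sup_{I\subseteq\IR}\frac{1}{\ell(I)}\int_I\int_I\frac{|A^\natural(t,x)-A^\natural(s,x)|^2}{|t-s|^2}\d s\d t\leq M^\natural\qquad(\text{a.e. }x\in\Omega).
\end{equation*}
I would partition $\IR=J_{-2}\cup J_{-1}\cup J_0\cup J_1\cup J_2$, where $J_0=[0,T]$, the $J_{\pm 1}$ are its reflected copies, and the $J_{\pm 2}$ are the two unbounded constant regions, and decompose the double integral into the $5\times 5$ contributions $\mathcal{I}_{ij}(I)=\int_{I\cap J_i}\int_{I\cap J_j}(\cdots)\d s\d t$. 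The diagonal terms with $k=0,\pm 1$ reduce by an affine change of variables (reflection about $0$ or $T$) to assumption (\ref{Eq: Ass A}) on $J_0$; those with $k=\pm 2$ vanish because $A^\natural$ is constant there. For off-diagonal terms $\mathcal{I}_{ij}$ with $i\neq j$ and $J_i,J_j$ adjacent across a seam $b\in\{-T,0,T,2T\}$, I would apply the triangle inequality $|A^\natural(t,x)-A^\natural(s,x)|^2\leq 2|A^\natural(t,x)-A^\natural(b,x)|^2+2|A^\natural(s,x)-A^\natural(b,x)|^2$ together with the $1/2$-Hölder bound at $b$, reducing the integrand to a kernel of the form $C/(|t-b|+|s-b|)$; by the AM-GM inequality $|t-b|+|s-b|\geq 2\sqrt{|t-b|\cdot|s-b|}$ this integrates linearly in $\ell(I)$. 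Non-adjacent pairs are handled more crudely, using $|t-s|\geq c(T)>0$ together with $\|A^\natural\|_\infty\leq\Lambda$.

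The main obstacle is the adjacent off-diagonal estimate, where $t\in J_i$ and $s\in J_j$ can lie arbitrarily close across the seam: precisely the borderline $1/2$-Hölder regularity supplied by Strichartz is strong enough to cancel the non-integrable singularity $|t-s|^{-2}$ and leave a kernel that integrates linearly in $\ell(I)$; any less regularity would fail. Once the Campanato bound on $\IR$ is in hand, another application of Strichartz's theorem yields $\|\dhalf A^\natural(\cdot,x)\|_{\BMO(\IR)^{n\times n}}\leq CM^\natural$ with $M^\natural$ depending on $M$, $T$, $\Lambda$, $\lambda$, and $n$ as claimed.
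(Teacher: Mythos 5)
Your construction is essentially the paper's in spirit --- extend by even reflection and then flatten to constants --- but the two proofs diverge in how they tame the seams and the tails. The paper never needs the $\tfrac12$-H\"older continuity of $A(\cdot,x)$: for the reflected cross term $\int_0^a\int_0^b |A(-t,x)-A(s,x)|^2|t+s|^{-2}\,\mathrm{d}s\,\mathrm{d}t$ it simply uses $|t+s|\ge |t-s|$ to fall back on \eqref{Eq: Ass A}, and instead of continuing by the boundary values it interpolates to the constant matrix $\lambda$ by a Lipschitz cutoff $\varphi$, so that the outer contribution is controlled by $\|A\|_\infty\le\Lambda$ and the Lipschitz constant $2T^{-1}$ of $\varphi$. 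Your route through the H\"older bound at each seam is viable and arguably more transparent, at the price of first producing a continuous representative.

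Two points need repair. First, you should not derive the $\tfrac12$-H\"older representative on $[0,T]$ from Strichartz's theorem: that theorem lives on $\IR$ and is precisely what the extension is meant to give you access to, so as written the argument is circular. Instead, note that \eqref{Eq: Ass A} and $|t-s|\le\ell(I)$ give $\ell(I)^{-2}\int_I\int_I|A(t,x)-A(s,x)|^2\,\mathrm{d}s\,\mathrm{d}t\le M\ell(I)$ for every $I\subseteq[0,T]$, i.e.\ a Campanato condition of order $\tfrac12$, and invoke the classical (local) Campanato embedding to get the $C^{0,1/2}$ representative with seminorm $\lesssim\sqrt M$. Second, the ``crude'' treatment of non-adjacent pairs breaks down for the pair $J_{-2}\times J_2$ of the two unbounded constant regions: there $|A^\natural(t,x)-A^\natural(s,x)|=|A(T,x)-A(0,x)|$ need not vanish, and combining $|t-s|\ge c(T)$ with the product $\ell(I\cap J_{-2})\,\ell(I\cap J_2)$ yields a bound of order $\ell(I)^2$ rather than $\ell(I)$. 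The fix is one line: integrate the inner variable over the whole half-line first, $\int_{I\cap J_2}(s-t)^{-2}\,\mathrm{d}s\le (2T-t)^{-1}\le (3T)^{-1}$ for $t\le -T$, and then integrate over $I\cap J_{-2}$, whose length is at most $\ell(I)$. With these two repairs your argument is complete and yields the lemma with a constant of the stated form.
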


\begin{proof}
Due to Strichartz' characterization of $\BMO$-Sobolev spaces \cite[Thm.~3.3]{Strichartz-BMOSobolev}, it suffices to construct an extension $A^\natural: \IR \times \Omega \to \IC^{n \times n}$ with ellipticity bounds as in \eqref{Eq: ellipticity} and
\begin{align}
\label{Eq: extension}
 \int_I \int_I \frac{|A^\natural (t,x) - A^\natural (s,x)|^2}{|t-s|^{2}} \d s \d t \leq M^\natural \ell(I) \qquad (\text{a.e.\ $x \in \Omega$})
\end{align}
for every interval $I \subseteq \IR$, see also Section~\ref{Subsec: Comparison}. Since the extension will be in $t$-direction only, we abbreviate $A(t,x)$ simply by $A(t)$. The proof is in two steps.

\smallskip

\emph{Step 1: Extension to $[-T, 2T]$ by even reflection.}

\smallskip

\noindent Let us define $A^\flat(t) = A(t)$ for $t \in [0,T]$ and $A^\flat(t) = A(-t)$ for $t \in [-T,0]$. Clearly, this extension is elliptic with the same parameters as $A$. We claim that \eqref{Eq: extension} holds for any interval $I \subseteq [-T,T]$. In fact, by assumption on $A$ we only have to treat the case $I = [-a,b]$, where $0 \leq a,b \leq T$. Here, we split
\begin{align*}
  \int_I \int_I \frac{|A^\flat (t) - A^\flat (s)|^2}{|t-s|^{2}} \d s \d t
  &\leq  \bigg(\int_0^a \int_0^a + \int_0^b \int_0^b \bigg) \frac{|A(t,x) - A(s,x)|^2}{|t-s|^{2}} \d s \d t \\
  &\phantom{\leq} + 2 \int_0^a \int_0^b \frac{|A (-t,x) - A (s,x)|^2}{|t+s|^{2}} \d s \d t,
\end{align*}
where the first two integrals give a contribution of at most $M(a+b) = M \ell(I)$ and in the third integral we use $|t+s| \geq |t-s|$ to get a bound by $2M \max\{a,b\} \leq 2 M \ell(I)$. Hence, $A^\flat$ satisfies the estimate with constant $3 M$. By the same procedure, we can further extend to $[-T,2T]$ by the expense of a constant $9 M$ in the estimate.

\smallskip

\emph{Step 2: Extension to the real line.}

\smallskip

\noindent We extend $A^\flat$ by zero outside of $[-T,2T]$. Then, we let $\varphi: \IR \to [0,1]$ be equal to $1$ on $[0,T]$, zero outside of $[-T/2, 3T/2]$, and connect continuously and linearly in between. The extension of $A$ that we consider is $A^\natural := \varphi A^\flat + (1-\varphi)\lambda$.

Since $\varphi$ is independent of $x$, we easily see that $A^\natural$ satisfies the same ellipticity bounds as $A$. Concerning the estimate, we first bound the left-hand side of \eqref{Eq: extension} by
\begin{align*}
 \int_I \int_I \varphi(t)^2 \frac{|A^\flat (t) - A^\flat (s)|^2}{|t-s|^{2}} \d s \d t 
 + \int_I \int_I (|A^\flat(s)|^2 + \lambda^2) \frac{|\varphi (t) - \varphi (s)|^2}{|t-s|^{2}} \d s \d t.
\end{align*}
From the support properties of $\varphi$ and $A^\flat$ along with the bound $|A^\flat(t)| \leq \Lambda$ for a.e.\ $t \in \IR$, we see that the first of these two integrals is no larger than
\begin{align*}
 \int_{I \cap \left[-\frac{T}{2}, \frac{3T}{2} \right]} & \int_{I \cap \left[\vphantom{\frac{T}{2}}-T, 2T \right]} \frac{|A^\flat (t) - A^\flat (s)|^2}{|t-s|^{2}} \d s \d t
 + \int_{I \cap \left[-\frac{T}{2}, \frac{3T}{2} \right]} \int_{I \setminus \left[\vphantom{\frac{T}{2}}-T, 2T \right]} \frac{4 \Lambda^2}{|t-s|^{2}} \d s \d t.
\end{align*}
Thanks to the outcome of Step~1, this sums up to at most $(9M + 8 \Lambda^2 T^{-1}) \ell(I)$. Similarly, for the second integral we obtain the upper bound $6(\Lambda^2 + \lambda^2)T^{-1} \ell(I)$, using that $\varphi$ is bounded and Lipschitz continuous with constant $2T^{-1}$.
\end{proof}

Having all this at hand, the proof of Theorem~\ref{Thm: Main} is rather routine.

\begin{proof}[Proof of Theorem~\ref{Thm: Main}]
In the light of Lions' abstract result on maximal regularity in $V^*$ only the existence of a solution $u \in \H^1(0,T; H) \cap \H^{1/2}(0,T; V)$ is a concern. So, let $f \in \L^2(0,T; H)$ be given and let $E_0 f \in \L^2(\IR; H)$ be its extension by zero. Also, we extend $A$ to the whole real line using Lemma~\ref{Lem: extension}, where for convenience this extension is also denote by $A$. According to Proposition~\ref{Prop: key} there exists a solution $u \in \H^1(\IR; H) \cap \H^{1/2}(\IR; V)$ to the problem
\begin{align*}
 u'(t) + u(t) + \A(t)u(t) = \e^t E_0f(t) \qquad (t \in \IR).
\end{align*}
Once we have checked $u(0) = 0$, the restriction of $\e^{-t} u(t)$ to $[0,T]$ will be the solution we are looking for and the precise estimate stated in the theorem is a consequence of Proposition~\ref{Prop: key} and Lemma~\ref{Lem: extension}.

Since $u \in \H^1(\IR; V^*) \cap \L^2(\IR; V)$, the function $\|u\|_H^2$ is absolutely continuous with derivative $\frac{\mathrm{d}}{\mathrm{d} t} \|u\|_H^2 = 2 \Re \langle u', u \rangle$. (Check this for smoothened versions of $u$ first). Thus,
\begin{align*}
 \min\{1,\lambda\} \int_{-\infty}^0 \|u\|_V^2 \d t
\leq \Re \int_{-\infty}^0 \langle u+\A u, u \rangle \d t
= - \Re \int_{-\infty}^0 \langle u', u \rangle \d t
= -\frac{1}{2} \|u(0)\|_H^2,
\end{align*}
where we have used the equation for $u$ along with $E_0 f = 0$ on $(-\infty,0)$ in the second step. Thus, $\|u(0)\|_H = 0$.
\end{proof}

\begin{rem}
\label{Rem: New proof Lions}
A combination of Lemma~\ref{Lem: WP on R} and the argument performed above gives a new, neat proof of Lions' abstract maximal regularity result in $V^*$ itself, assuming only boundedness, quasi-coercivity, and measurability of $\a$. In particular, $H$ does not have to be separable. This was done by Dier and Zacher, see \cite[Thm.~6.1]{Dier-Zacher}. 
\end{rem}
\section{Further remarks and open questions}

\noindent Let us revisit the proof of Theorem~\ref{Thm: Main}. Given $f \in \L^2(\IR; H)$, Lemma~\ref{Lem: WP on R} gave us a a solution $u$ to the non-autonomous problem $u' + u + \A u = f$ on the real line that additionally satisfied $\dhalf u \in \L^2(\IR; H)$. Then, it was the boundedness of the commutator $[A, \dhalf]$ on $\L^2(\IR; H)$ that followed from our assumption on $\A$ and in turn presented us with one-half derivative more in $\L^2(\IR; H)$, that is, maximal regularity. With this at hand, the proof could be completed by rather standard arguments. 

Analogous commutator bounds hold for any fractional derivative $D_t^\alpha$, $\alpha \in (0,1)$, see again \cite[Thm.~3.3]{Murray} or use the $T(1)$-theorem. The following result can therefore be obtained by a literal repetition of the argument. Results of the same spirit also appeared in \cite[Thm.~6.2]{Dier-Zacher} for general non-autonomous forms under different regularity assumptions.

\begin{thm}
\label{thm: fractional max reg}
Consider a non-autonomous form $\a$ inducing elliptic divergence-form operators with either Dirichlet, Neumann, or mixed boundary conditions on an open set $\Omega \subseteq \IR^n$ as defined in Section~\ref{Subsec: Notation}. Suppose there exist $\alpha \in (0, \frac{1}{2})$ and $M \geq 0$ such that
\begin{align*}
\sup_{I \subseteq [0,T]} \frac{1}{\ell(I)} \int_I \int_I \frac{|A(t,x) - A(s,x)|^2}{|t-s|^{1+2\alpha}} \d s \d t \leq M \qquad (\text{a.e.\ $x \in \Omega$}).
\end{align*}
Then, given $f \in \L^2(0,T; \H)$, the unique solution $u \in \H^1(0,T; V^*) \cap \L^2(0,T; V)$ of problem \eqref{Eq: Cauchy} satisfies
\begin{align*}
 \|u\|_{\H^{\alpha + 1/2}(0,T; H)} + \|u\|_{\H^{\alpha}(0,T; V)} \leq C \|f\|_{\L^2(0,T; H)},
\end{align*}
where $C$ depends on $\lambda$, $\Lambda$, $\alpha$, $M$, $T$, and $n$.
\end{thm}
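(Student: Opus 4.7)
My plan is to follow the same path as the proof of Theorem~\ref{Thm: Main}, substituting $D_t^{\alpha}$ for $\dhalf$ throughout and invoking Murray's commutator estimate for general fractional orders. The argument splits, as before, into an extension step and a real-line maximal regularity step, the key point being that the commutator argument of Proposition~\ref{Prop: key} only required (i) access to some Hilbertian coercivity for $\fL$ producing a well-posedness bound in $\E$, and (ii) an $\L^2(\IR; H)$-bound on $[D_t^{\alpha}, A]$ coming from a $\BMO$ hypothesis on $D_t^{\alpha} A(\cdot,x)$. Neither ingredient is tied to the specific exponent $\tfrac{1}{2}$.

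First, I would prove the fractional analogue of Lemma~\ref{Lem: extension}: any $A \colon [0,T]\times \Omega \to \IC^{n\times n}$ satisfying ellipticity and the hypothesis of Theorem~\ref{thm: fractional max reg} admits an extension $A^{\natural}\colon \IR\times\Omega \to \IC^{n\times n}$, still elliptic with the same constants, with $\|D_t^\alpha A^\natural(\cdot,x)\|_{\BMO(\IR)^{n\times n}}\leq M^{\natural}$ for a.e.\ $x\in\Omega$. The proof is verbatim the two-step reflection-and-cutoff of Lemma~\ref{Lem: extension}, with the exponent $2$ in $|t-s|^{2}$ replaced by $1+2\alpha$ in every intermediate inequality; the reflection estimate survives because $|t+s|\ge |t-s|$ is exponent-independent, and the cutoff $\varphi$ is still Lipschitz so the off-diagonal terms produced by $(1-\varphi)\lambda$ remain under control. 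The final reformulation as a $\BMO$-bound on $D_t^{\alpha} A^\natural(\cdot,x)$ invokes Strichartz's characterization of $\BMO$-Sobolev spaces, which is valid for every $\alpha\in(0,1)$, not just $\alpha=\tfrac{1}{2}$.

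Second, I would establish the real-line analogue of Proposition~\ref{Prop: key}: under $\|D_t^{\alpha} A(\cdot,x)\|_{\BMO(\IR)^{n\times n}}\leq M^{\natural}$ a.e., the solution $u=(1+\fL)^{-1}f$ of $u'+u+\A u = f$ on $\IR$ obeys
\begin{align*}
\|u\|_{\H^{\alpha+1/2}(\IR;H)} + \|u\|_{\H^{\alpha}(\IR;V)} \leq C\|f\|_{\L^2(\IR;H)}.
\end{align*}
For this I would imitate the proof of Proposition~\ref{Prop: key} line by line. After the qualitative reduction (Lipschitz $A$ via mollification and $f\in\H^1(\IR;H)$ via density as in that proof), Lemma~\ref{Lem: Higher regularity} gives $u\in\H^1(\IR;V)$, so the identity
\begin{align*}
D_t^\alpha u = (1+\fL)^{-1}(\fL D_t^\alpha - D_t^\alpha \fL)u + (1+\fL)^{-1} D_t^\alpha f
\end{align*}
is legitimate in $\E$ because $D_t^\alpha u \in \H^{1-\alpha}(\IR;V)\subseteq \E$ and $D_t^\alpha f \in \H^{1-\alpha}(\IR;H)\subseteq \E^*$ (using $1-\alpha\ge \tfrac{1}{2}$). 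Cancelling the time derivative yields $(\fL D_t^\alpha - D_t^\alpha \fL)u = \nabla_V^*[D_t^\alpha,A]\nabla_V u$, and the fractional version of Murray's theorem (or $T(1)$) applied coordinatewise in $x$, exactly as in Corollary~\ref{Cor: multidimensional commutator}, gives $\|[D_t^\alpha,A]g\|_{\L^2(\IR;H)^n} \lesssim M^{\natural}\|g\|_{\L^2(\IR;H)^n}$. Applying Lemma~\ref{Lem: WP on R} on both sides then puts $D_t^\alpha u$ in $\E = \H^{1/2}(\IR;H)\cap\L^2(\IR;V)$, which is precisely the desired conclusion. The mollification argument of Proposition~\ref{Prop: key} then removes the Lipschitz assumption.

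Finally, to conclude Theorem~\ref{thm: fractional max reg} I would reproduce the closing argument of the proof of Theorem~\ref{Thm: Main}: extend $A$ to the line via the first step, extend $f$ by zero to $E_0 f\in\L^2(\IR;H)$, solve on $\IR$ with right-hand side $\e^t E_0 f$ using the second step, restrict $\e^{-t}u$ to $[0,T]$, and check $u(0)=0$ by the absolute-continuity computation on $(-\infty,0)$, which is unchanged. The only obstacle worth flagging is the rigorous manipulation of fractional time derivatives, but this is absorbed once and for all by the mollification step that I imported from Proposition~\ref{Prop: key}; no new analytic difficulty appears.
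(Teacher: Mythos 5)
Your proposal is correct and coincides with the paper's intended argument: the paper itself proves Theorem~\ref{thm: fractional max reg} by exactly this ``literal repetition'' of the proof of Theorem~\ref{Thm: Main} with $\dhalf$ replaced by $D_t^\alpha$, citing the fractional version of Murray's commutator estimate. Your fleshing-out of the details (the exponent-independence of the extension lemma, the embeddings $\H^{1-\alpha}(\IR;V)\subseteq \E$ and $D_t^\alpha\colon \L^2(\IR;H)\to\E^*$ for $\alpha\le\tfrac12$, and the identification of $D_t^\alpha u\in\E$ with the claimed regularity) is accurate.
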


Finally let us recall that Murray's commutator estimate from Proposition~\ref{Prop: Murray commutator} is sharp. In particular, it does not remain true if the multiplier is merely $\frac{1}{2}$-H\"older continuous \cite{Murray}. Also the proof of Proposition~\ref{Prop: key} supplies an exact factorization of the half time-derivative of the solution using the commutator $[A, \dhalf]$, compare with equations \eqref{Eq1: key} and \eqref{Eq2: key}. Guided by this, we make the following

\begin{conj}
The $\frac{1}{2}$-H\"older continuity of $\A: [0,T] \to \Lop(V, V^*)$, that is,
\begin{align*}
 \|\A(t)-\A(s)\|_{V \to V^*} \leq C|t-s|^{1/2} \qquad (t,s \in [0,T]),
\end{align*}
does not imply maximal regularity of $\a$ in $H$ in general, not even if $\a$ induces elliptic differential operators in divergence form with Dirichlet, Neumann, or mixed boundary conditions on some open set $\Omega$.
\end{conj}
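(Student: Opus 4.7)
My plan is to construct a counterexample by running the exact factorization of Proposition~\ref{Prop: key} in reverse. If maximal regularity holds for some divergence-form $\A$ on $\IR$, then equations \eqref{Eq1: key}--\eqref{Eq2: key} force the term $(1+\fL)^{-1}\nabla_V^{*}[\dhalf, A]\nabla_V u$ to lie in $\E$ for every $f\in \L^2(\IR; H)$. The strategy is to exploit the sharpness of Murray's commutator estimate mentioned just before the conjecture: choose coefficients so that the scalar commutator $[a,\dhalf]$ is unbounded on $\L^2(\IR)$, and then produce an $f$ whose $\nabla_V u$ activates this bad direction, contradicting the regularity forced by maximal regularity.

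First I would invoke the sharp direction of Proposition~\ref{Prop: Murray commutator} to select a scalar $\frac{1}{2}$-Hölder continuous $a:\IR\to[\lambda,\Lambda]$ with $\dhalf a\notin\BMO(\IR)$, so that $[a,\dhalf]$ is unbounded on $\L^2(\IR)$. The naive choice $A(t,x)=a(t)I_n$ fails: then $a(t)(-\Delta)$ can be brought to autonomous form by the time change $s=\int_0^t a(\tau)\,\mathrm{d}\tau$ and de Simon's theorem grants maximal regularity. I would instead couple $a(t)$ nontrivially with space, starting with $\Omega=(0,1)^2$, Dirichlet conditions, and $A(t,x)=\mathrm{diag}(1,a(t))$; if this remains tractable by separation of variables I would pass to $A(t,x)=R(x)^{*}\mathrm{diag}(1,a(t))R(x)$ for a smooth non-constant rotation $R(x)$ that entangles the two spatial directions so that no change of variables restores autonomy. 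Ellipticity and exact $\frac{1}{2}$-Hölder continuity in $t$ are inherited from $a$.

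The difficult step is to turn the necessary condition coming from the factorization into a genuine obstruction. Having fixed $A$, I must exhibit $f\in \L^2(\IR; H)$ for which $\nabla_V^{*}[\dhalf, A]\nabla_V u$ truly fails to lie in $\E^{*}$, a condition weaker than pointwise boundedness of $[\dhalf,A]$. The plan is to test with separated inputs $f(t,x)=g(t)\varphi(x)$ where $\varphi$ is an eigenfunction of the spatial elliptic part (available when $A$ has enough structure), reducing matters to a one-parameter family of scalar problems on the real line into which Murray's bad function can be transplanted. The central tension is that preserving ellipticity and exact $\frac{1}{2}$-Hölder regularity while simultaneously breaking every autonomization trick \emph{and} retaining enough structure for a clean scalar reduction is delicate — extra $x$-dependence to defeat reduction may equally destroy the accessibility of the bad scalar mode. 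This balancing act is, I expect, the main reason only a conjecture is stated here.
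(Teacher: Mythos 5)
This statement is a \emph{conjecture}: the paper offers no proof of it, and to your credit your own write-up concedes at the end that the decisive step is missing. So the only question is whether your sketch closes the gap, and it does not. The central logical problem is that the factorization \eqref{Eq1: key}--\eqref{Eq2: key} shows that boundedness of $[A,\dhalf]$ is \emph{sufficient} for maximal regularity; it does not show it is necessary. If maximal regularity held for your coefficients, what you could hope to extract from that identity is at best boundedness of the composite map $f \mapsto (1+\fL)^{-1}\nabla_V^{*}[\dhalf,A]\nabla_V(1+\fL)^{-1}f$ from $\L^2(\IR;H)$ to $\E$ --- and even writing \eqref{Eq1: key} requires a priori regularity on $u$, which is exactly what is in doubt. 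Unboundedness of the scalar commutator $[a,\dhalf]$ on all of $\L^2(\IR)$ does not contradict boundedness of this composite, because the commutator is only ever applied to inputs of the special form $\nabla_V u$ with $u$ a solution, and its output is then smoothed by $(1+\fL)^{-1}\nabla_V^{*}$; cancellations in this composition are entirely possible. Exhibiting an $f$ that ``activates the bad direction'' through these two layers is the whole difficulty, and nothing in your sketch addresses it.

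Your structural observations are sound as far as they go: the isotropic choice $a(t)I_n$ is indeed neutralized by a time change, and the diagonal choice $\mathrm{diag}(1,a(t))$ on the square decouples under separation of variables into scalar ODEs $c_{jk}'+\pi^2(j^2+a(t)k^2)c_{jk}=f_{jk}$, each of which satisfies $\|c_{jk}'\|_{\L^2}\le(1+\Lambda/\lambda)\|f_{jk}\|_{\L^2}$ with \emph{no} regularity hypothesis on $a$ at all; so that example always has maximal regularity. But this means the entangling rotation $R(x)$ is not an optional refinement --- it is forced --- and once it is in place the eigenfunction testing you propose for the scalar reduction is no longer available, as you yourself note. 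For comparison, the only known counterexample near this regularity threshold (Fackler's, cited in the introduction) is $\alpha$-H\"older for every $\alpha<\frac12$ but not for $\alpha=\frac12$, is not of divergence form, and is built by a quite different mechanism; this is some indication that the obstruction is not a routine transplantation of the sharpness of Murray's estimate. The conjecture remains open, and your proposal is a reasonable research plan rather than a proof.
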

\def\cprime{$'$} \def\cprime{$'$} \def\cprime{$'$}

\end{document}